\newtheorem{thma}{Theorem}
\newtheorem{lema}[thma]{Lemma}
\newtheorem{propo}[thma]{Proposition}
\newtheorem{question}{Question}
\newcommand{\B}{{\cal B}}
\newcommand{\bn}{b}
\begin{document}

\begin{verbatim}\end{verbatim}\vspace{2.5cm}

\begin{frontmatter}

\title{The b-continuity of graphs with large girth \thanksref{ALL}}

\author[Pargo,DM]{Ana Silva\thanksref{email}}
\author[Pargo,DC]{Claudia Linhares Sales\thanksref{email}}

  \thanks[email]{\emph{Email addresses:}
  \href{}{\texttt{\normalshape anasilva@mat.ufc.br} (Silva), \texttt{\normalshape linhares@lia.ufc.br} (Linhares Sales)}} 
 
\address[Pargo]{ParGO Research Group - Parallelism, Graphs and Optimization} 
\address[DM]{Departamento de Matem\'atica, Universidade Federal do Cear\'a, Brazil}
\address[DC]{Departamento de Computa\c{c}\~ao, Universidade Federal do Cear\'a, Brazil}

\thanks[ALL]{Partially supported by CNPq/Brazi.}

\begin{abstract}
A b-coloring of the vertices of a graph is a proper coloring where each color class contains a vertex which is adjacent to each other color class. The b-chromatic number of $G$ is the maximum integer $\bn(G)$ for which $G$ has a b-coloring with $\bn(G)$ colors. A graph $G$ is b-continuous if $G$ has a b-coloring with $k$ colors, for every integer $k$ in the interval $[\chi(G),\bn(G)]$. It is known that not all graphs are b-continuous. In this article, we show that if $G$ has girth at least 10, then $G$ is b-continuous.
\end{abstract}

\begin{keyword}
b-chromatic number, b-continuity, graphs with large girth.
\end{keyword}
\end{frontmatter}

\section{Introduction}

Let $G$ be a simple graph\footnote{The graph terminology used in this paper follows \cite{BM08}.}. A \emph{proper coloring of $G$} (from now on called coloring) is a function $\psi:V(G)\rightarrow \mathbb{N}$ such that $\psi(u)\neq \psi(v)$ whenever $uv\in E(G)$. We say that $u\in V(G)$ is a \emph{b-vertex in $\psi$ (of color $\psi(u)$)} if for every color $c\neq \psi(u)$, there exists $v$ colored with $c$ that is adjacent to $u$. 
Observe that if $\psi$ has a color class $c$ that has no b-vertices, then we can separatedly change the color of each vertex in $c$ to obtain a proper coloring with fewer colors.  But since the coloring problem is NP-complete, this clearly cannot always be applied until one reaches a coloring of $G$ with $\chi(G)$ colors. An interesting case of study therefore is to investigate the worst possible scenario. More formally, Irving and Manlove ~\cite{Irving.Manlove.99} defined a \emph{b-coloring} as a coloring of $G$ that has at least one b-vertex in each of its color classes, and the \emph{b-chromatic number of $G$}, denoted by $\bn(G)$, as the maximum number of colors used by a b-coloring of $G$. They showed that the problem is NP-complete in general, but polynomial-time solvable for trees. After this, the problem has been shown to be NP-complete also for bipartite graphs \cite{KRATOCHVIL.etal.02},  chordal graphs \cite{HLS.11}, and line graphs~\cite{CLMSSS.15}.

Irving and Manlove also observed that the cube has b-colorings with 2 and 4 colors, but not with 3 colors, and in~\cite{KRATOCHVIL.etal.02} the authors show that $K_{n,n}$ minus a perfect matching only admits b-colorings with 2 and $n$ colors, for every positive integer $n$. This inspired the following definitions: the \emph{b-spectrum} of a graph $G$ is the set $S_b(G)$ containing every $k$ for which $G$ has a b-coloring with $k$ colors; and $G$ is called \emph{b-continuous} if $S_b(G)$ contains every integer in the interval $[\chi(G),\bn(G)]$. In~\cite{Barth.Cohen.Faik}, the authors prove that in fact for every finite subset $S\subset \mathbb{N}-\{1\}$, there exists a graph $G$ such that $S_b(G) = S$. They also prove that deciding whether a given graph $G$ is b-continuous is an NP-complete problem, even if b-colorings with $\chi(G)$ and $b(G)$ colors are given. Concerning positive results, we mention that the following graph classes are b-continuous: chordal graphs~\cite{F.04,KKV.04}; Kneser graphs $K(n,2)$ for $n\ge 17$~\cite{JO.09}; $P_4$-sparse graphs~\cite{Bonomo.etal.09} and $P_4$-tidy graphs~\cite{BBK.11}; and regular graphs with girth at least 6 and with no cycles of length 7~\cite{BK.12}. 

The \emph{girth of $G$} is the minimum size of a cycle in $G$. Observe that a necessary condition for $G$ to have a b-coloring with $k$ colors is that $G$ also has at least $k$ vertices with degree at least $k-1$, namely the b-vertices in such a b-coloring. Therefore, if $m(G)$ is defined as the maximum value for which $G$ satisfies this condition, i.e., $m(G)$ is the largest integer $k$ such that $G$ has at least $k$ vertices of degree at least~$k-1$, then naturally we get $\bn(G)\le m(G)$. One can verify that the value $m(G)$ can be easily computed given the degree sequence of $G$ in non-increasing order.
As mentioned before, Irving and Manlove proved in their seminal paper that finding the b-chromatic number of a tree can be done in polynomial time. In fact, they prove that if $G$ is a tree, then $\bn(G)$ is at least $m(G)-1$, and one can decide whether $\bn(G)$ equals $m(G)$ in polynomial time. Later, it was noted that in fact this property holds for graphs with large girth, and the most recent  result regarding this aspect says that $\bn(G)\ge m(G)-1$ holds whenever $G$ has girth at least~7~\cite{CLS.15}. We believe this to be already a good reason for the investigation of the b-continuity of graphs with large girth, but in~\cite{HLS.11} and~\cite{EK.09} there are further questions concerning the b-chromatic number of graphs with large girth. 
In~\cite{HLS.11}, the authors conjecture that if $G$ belongs to a certain subclass of the bipartite graphs with girth at least~6, then $\bn(G)\ge m(G)-1$. Although the target subclass is a fairly simple one, up to now the only existing result on their conjecture is given in~\cite{LC.13}, where the authors prove that their conjecture is a consequece of the famous Erd\H{o}s-Faber-Lov\'asz Conjecture~\cite{E.81}. 
In~\cite{EK.09}, El Sahili and Kouider ask whether $\bn(G)=m(G)$ whenever $G$ is a $d$-regular graph with girth at least~5. Blidia, Maffray and Zemir~\cite{BMZ.09} answered their question in the negative by showing that the Petersen graph does not have this property; then they conjecture that this is the only exception. Since then, a number of partial results have been given; for instance~\cite{CJ.11,BMZ.09,EKM.15,S.12}. 
All these results on graphs with large girth indicate that, unlike the classic coloring problem, having large girth somehow helps in finding b-colorings of $G$. This is why we ask the following question, similar to the one posed in~\cite{CLS.15}.

\begin{question}
What is the minimum $\hat{g}$ such that $G$ is b-continuous whenever $G$ has girth at least $\hat{g}$?
\end{question}

In this article, we prove that $\hat{g}\le 10$. This and the fact that complete bipartite graphs without a perfect matching are not b-continuous~\cite{KRATOCHVIL.etal.02} give us that $5\le \hat{g}\le 10$. Up to our knowledge, no non-b-continuous graph with girth at least~5 is known.

\begin{thma}\label{main}
If $G$ has girth at least 10, then $G$ is b-continuous.
\end{thma}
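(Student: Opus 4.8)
The plan is a downward induction on the number of colors. The base case is free: $\chi(G)\in S_b(G)$, because any proper coloring of $G$ with $\chi(G)$ colors is automatically a b-coloring (if some class had no b-vertex, recoloring its vertices one by one would produce a proper coloring with fewer colors). So it suffices to prove the decrement step: if $G$ has a b-coloring with $k$ colors and $k>\chi(G)$, then $G$ has a b-coloring with $k-1$ colors; iterating this from a b-coloring with $\bn(G)$ colors then fills the whole interval $[\chi(G),\bn(G)]$.

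For the decrement step, fix a b-coloring $\psi$ with classes $V_1,\dots,V_k$ and a b-vertex $w_i\in V_i$ for each $i$. The goal is to recolor all of $V_k$ using only colors from $\{1,\dots,k-1\}$ while keeping a b-vertex in each surviving class. The easy part is to dispose of every $u\in V_k$ that is not a b-vertex: such a $u$ misses some color $c\in\{1,\dots,k-1\}$ in its neighborhood, so it may be recolored with $c$, and since $V_k$ is independent this can be done for all such $u$ at once without clashes and without harming any $w_i$ (such a recoloring only enlarges neighborhoods and cannot conflict with $w_i$, which already forces a different color on $u$). We are thus left with b-vertices $u_1,\dots,u_t\in V_k$, each adjacent to \emph{every} color in $\{1,\dots,k-1\}$; these are the vertices that still must be recolored.

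The core of the argument handles the $u_j$'s one at a time. For $u=u_j$ we pick a color $c=c_j$, recolor the neighbors of $u$ that currently have color $c$ so that $c$ disappears from $N(u)$, and then set $\psi(u)=c$. Girth at least $10$ is what makes this possible, because it forces the ball of radius $4$ around any vertex to be a tree: consequently the $c$-colored neighbors of $u$ are pairwise non-adjacent and pairwise have no common neighbor other than $u$, so recoloring them are independent local operations; any secondary recoloring they force (a recolored vertex must avoid its own neighbors' colors, and we must not re-introduce color $k$) stays inside this tree and therefore terminates quickly; and the bounded regions affected by two different $u_j$'s, even when they overlap, sit together inside a single tree, so the operations can be reconciled. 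The point requiring care throughout is that no b-vertex $w_i$ with $i<k$ may be lost without replacement: if freeing $c_j$ at $u$ forces recoloring $w_{c_j}$, then (since $u$ remains adjacent to all of $\{1,\dots,k-1\}\setminus\{c_j\}$ after the change) the newly $c_j$-colored vertex $u$ becomes the b-vertex of class $c_j$; an analogous accounting deals with the case where a cascade reaches some other $w_i$.

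I expect the main obstacle to be exactly this bookkeeping: choosing the promoted colors $c_j$ and the accompanying cascades so that simultaneously (a) no vertex receives two different colors, (b) every recolored vertex genuinely has an admissible color available — in particular one different from $k$ and from the colors its neighbors already use — and (c) each class $1,\dots,k-1$ ends up with a b-vertex. I would organize the proof as a case analysis keyed to the local structure around the $u_j$'s — the multiplicities of colors in $N(u_j)$, whether $w_{c_j}\in N(u_j)$, and how the radius-$4$ balls of different $u_j$'s and $w_i$'s meet — with each apparent obstruction ruled out because it would create a cycle of length less than $10$. Thus the girth hypothesis is used not via one decisive lemma but as the recurring reason that the recoloring operations have enough slack and stay independent.
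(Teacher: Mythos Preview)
Your outline is honest about where the difficulty lies, but the cascading recoloring you sketch cannot be completed by purely local girth arguments, and the paper in fact proceeds along a different line.

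The concrete gap is your own item (b): when you recolor a $c_j$-colored neighbor $v$ of $u_j$, you need a color in $\{1,\dots,k-1\}\setminus\psi(N[v])$, and nothing you have written guarantees one exists. If $v$ happens to be a b-vertex, every color already appears in $N[v]$, so you must recolor inside $N(v)$ first, and so on. Girth $\ge 10$ tells you these cascades sit in a tree, but it says nothing about how \emph{deep} the tree is; the cascade can leave the radius-$4$ ball you appeal to, and the promised ``analogous accounting'' for collisions with other $w_i$ is exactly the step that has no argument behind it. There is also a global obstruction: your procedure never uses the hypothesis $k>\chi(G)$ except in the statement, so if it worked it would work for $k=\chi(G)$ too, which is absurd. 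The paper makes this explicit --- a fully constructive decrement succeeding precisely when $k>\chi(G)$ would compute $\chi(G)$ by iteration, contradicting the NP-hardness of coloring graphs of large girth --- so any correct proof must at some point invoke $k>\chi(G)$ non-constructively.

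What the paper does instead is a dichotomy. Starting from a b-coloring $\psi$ with $k$ colors that is \emph{minimal} (fewest b-vertices), it shows, using only girth $\ge 5$, that for any b-vertex $u$ either a single local recoloring near $u$ already destroys the last b-vertex of some class (and cleaning that class gives a b-coloring with $k-1$ colors), or else $u$ is forced to be the center of a dense configuration called a $(k-1)$-iris or a dilated $(k-1)$-iris: roughly, $k-2$ or $k-1$ high-degree vertices packed into $N(u)$ or $N_{\le 2}(u)$. In that structural case the paper discards $\psi$ altogether and builds a b-coloring with $k-1$ colors from scratch: color the iris so that its members become b-vertices, surround this by a stable ``barrier'' all receiving color $1$, and then --- here is the non-constructive step --- use $k-1\ge\chi(G)$ to color everything outside with $\{2,\dots,k-1\}$. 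The girth $\ge 10$ hypothesis is used only to check that the barrier around a dilated iris is stable; the rest needs merely girth $\ge 6$. Your plan lacks both the minimality device and this structural escape hatch, and without them the recoloring cascade has no reason to terminate well.
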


To prove the above theorem, we first analyse a special case, then we prove that if $G$ has a b-coloring with $k$ colors and $k\ge \chi(G)+2$, then either we can construct a b-coloring with $k-1$ colors by applying certain operations, or $G$ falls into the special case. We emphasize that these operations actually hold for girth 6, i.e., the stronger hypothesis of the theorem is only necessary to treat the special case.

We mention that in~\cite{CLS.15}, the authors also give a polynomial-time algorithm to compute the b-chromatic number of graphs with girth at least~7. At the same time, computing the chromatic number of a graph with girth at least $g$ is NP-complete, for every fixed $g\ge 3$, even if $G$ is a line-graph~\cite{LK.07}. Therefore, any proof of a result as Theorem~\ref{main} must have a non-constructive part, as otherwise one can start with an optimal b-colouring of graph $G$, and decrease the number of used colors until one obtains an optimal coloring of $G$. The non-constructive part of our proof lies in the construction of a b-coloring for the special case.

Before we proceed, we need further definitions. Given a coloring $\psi$ of $G$ with $k$ colors, a value $i$ in $\{1,\cdots,k\}$ is called a \emph{color}, the set  $\psi^{-1}(i)$ is called the \emph{color class $i$}, and given a subset $X\subseteq V(G)$, we denote by $\psi(X)$ the set $\{\psi(v)\mid v\in X\}$. We say that a color is \emph{realized in $\psi$} if $\psi^{-1}(i)$ contains a b-vertex $u$; we also say that \emph{$u$ realizes color $i$}.  If $\psi$ is a coloring of $G$ with $k$ colors and $i\in\{1,\cdots,k\}$ is such that $i$ is not realized in $\psi$, then a coloring of $G$ with $k-1$ colors can be obtained by changing the color of each $u\in \psi^{-1}(i)$ to some $j\in \{1,\cdots,k\}\setminus \psi(N[u])$; we say that the new coloring is obtained from $\psi$ by \emph{cleaning color $i$}. 
The \emph{distance} between vertices $u$ and $v$ in $G$ is given by the minimum number of edges in a path between $u$ and $v$ in $G$; if $u$ and $v$ are in distinct components of $G$, we say that their distance is $\infty$. Given $u\in V(G)$, we denote by $N_i(u)$ the set of vertices at distance exactly $i$ from $u$, and by $N_{\le i}(u)$ the set $\bigcup_{j=1}^iN_i(u)$. Finally, we say that $u\in V(G)$ is \emph{$k$-dense} if $d(u)\ge k-1$, and we denote the set of $k$-dense vertices of $G$ by $D_k(G)$.

\section{Special case}

Let $G$ be any graph. We say that $u\in V(G)$ is a \emph{$k$-iris} in $G$ if there exists $S\subseteq N(u)\cap D_k(G)$ with cardinality $k-1$; and we say that $u$ is a \emph{dilated $k$-iris} if there exists a subset $S\subseteq N_{\le 2}(u)\cap D_k(G)$  with $k$ vertices such that $N(v)\cap N(w)=\emptyset$, for every $v,w\in S\cap N_2(u)$, $v\neq w$. Observe that if $u$ is a $k$-iris or a dilated $k$-iris, then $u\in D_k(G)$.

Let $u$ be a (dilated) $k$-iris and $G'$ be the induced subgraph $G[N_{\le 3}(u)]$. Roughly, the idea of the proof is to construct a b-coloring of $G'$ with $k$ colors, then use color 1 to color $N(G')$, and color the remaining vertices with $\{2,3,\cdots,k\}$, which is possible as long as $k\ge \chi(G)+1$. In~\cite{BK.12}, the authors prove that if $G$ is a regular graph with girth at least~6 and containing no cycles of length~7, then $G$ is b-continuous. For this, they show that, for any integer $k$ in the interval $[\chi(G),\bn(G)]$, a b-coloring with $k$ colors can be obtained. Their proof actually holds whenever $G$ has a $k$-iris, as can be seen in the next lemma. We present their proof for completeness sake, since our notation differs from theirs.

\begin{lema}[\cite{BK.12}]\label{lem:iris}
Let $G$ be a graph with girth at least 6 and with no cycles of length~7. If $G$ has a $k$-iris and $k\ge \chi(G)$, then $G$ has a b-coloring with $k$ colors.
\end{lema}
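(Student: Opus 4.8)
The plan is to build a b-coloring of $G$ with $k$ colors directly from the $k$-iris. Let $u$ be the $k$-iris, so there is a set $S=\{v_1,\dots,v_{k-1}\}\subseteq N(u)\cap D_k(G)$, and put $G'=G[N_{\le 3}(u)]$. First I would assign $\psi(u)=1$ and $\psi(v_i)=i+1$ for $i=1,\dots,k-1$; this already makes $u$ a b-vertex of color $1$. Next I want each $v_i$ to become a b-vertex of its color $i+1$, so I must colour a private neighbourhood of each $v_i$ with all colours in $\{1,\dots,k\}\setminus\{i+1\}$. Here the girth hypothesis does the work: since $\mathrm{girth}(G)\ge 6$, the sets $N(v_i)\setminus\{u\}$ are pairwise disjoint and each $v_i$ has at least $k-2$ neighbours other than $u$; because there are no cycles of length $6$ or $7$, there is also no short interference when we later extend the colouring, so the greedy assignment of the missing $k-2$ colours to distinct neighbours of $v_i$ (avoiding, on the neighbours inside $N_2(u)$, conflicts with $u$'s colour $1$) is consistent. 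After this step every colour $1,\dots,k$ is realised by a b-vertex lying in $N_{\le 2}(u)$.

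The second phase is to extend this partial colouring of $N_{\le 2}(u)$ to all of $V(G)$ using only the palette $\{1,\dots,k\}$, without creating conflicts and without destroying the b-vertices. The key idea, as sketched before Lemma~\ref{lem:iris}, is to use colour $1$ as a ``buffer'': colour every vertex of $N_3(u)$ (the boundary of $G'$) with colour $1$, which is safe because their already-coloured neighbours lie in $N_2(u)$ and, by girth at least $6$, a vertex of $N_3(u)$ has at most one neighbour in $N_2(u)$, and we only used colour $1$ on vertices of $N_{\le 2}(u)$ that are at distance $\le 2$ from $u$ — so I need to check that no vertex of $N_3(u)$ is adjacent to a vertex coloured $1$ in $N_2(u)$; if it is, recolour it with any colour in $\{2,\dots,k\}$ avoiding its (unique) coloured neighbour, which is possible since $k\ge 3$. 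Crucially, colouring $N_3(u)$ this way cannot spoil the b-vertices $v_1,\dots,v_{k-1},u$, since each of them has all its relevant neighbours already fixed in $N_{\le 2}(u)$.

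Finally, I would extend to $G - N_{\le 3}(u)$ (more precisely, to the rest of the graph, whose already-coloured neighbours all lie in $N_3(u)$ and hence are coloured with at most the colour $1$ together with finitely many colours, but in fact: every vertex outside $N_{\le 3}(u)$ that has a coloured neighbour has that neighbour in $N_3(u)$). Contract the already-coloured part $N_{\le 3}(u)$ to a single vertex coloured, say, all of it ``forbidden colour $1$'', and take a proper colouring of the remaining graph with colours $\{2,\dots,k\}$; this uses $k-1\ge \chi(G)$ colours and is proper overall provided no uncoloured vertex adjacent to $N_{\le 3}(u)$ receives colour $1$ — which holds by construction since we never use colour $1$ there. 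The resulting $\psi$ is a proper $k$-colouring with b-vertices $u,v_1,\dots,v_{k-1}$, hence a b-coloring with $k$ colours. The main obstacle I expect is the bookkeeping in phase one: verifying that when we pick the $k-2$ distinct ``witness'' neighbours for each $v_i$ and colour them, no two of these choices clash and none clashes with colour $1$ being used elsewhere; this is exactly where ``girth $\ge 6$ and no $C_7$'' is needed, to rule out paths of length $\le 2$ between two witness vertices of different $v_i$'s and between a witness vertex and $u$.
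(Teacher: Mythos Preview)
Your overall strategy matches the paper's: colour $u$ with $1$, the $v_i$ with $2,\dots,k$, give each $v_i$ its missing colours on $k-2$ private neighbours in $N_2(u)$ (possible since girth $\ge 6$ makes $G[N_{\le 2}(u)]$ a tree), then build a colour-$1$ ``buffer'' and finish the rest with $\{2,\dots,k\}$. Two points, however, need fixing.

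First, you assert $k-1\ge \chi(G)$, but the hypothesis is only $k\ge \chi(G)$. The paper disposes of the case $k=\chi(G)$ in one line (any optimal colouring is a b-colouring) and then works under $k\ge \chi(G)+1$; you need this split, otherwise the final extension step has one colour too few.

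Second, and more substantially, your buffer set is wrong. You colour all of $N_3(u)$ with $1$ and then declare that every still-uncoloured vertex sees only colour~$1$ among its coloured neighbours. This is false: the \emph{non-witness} neighbours of the $v_i$ lying in $N_2(u)$ are still uncoloured, and each such vertex is adjacent to some $v_i$, which carries a colour in $\{2,\dots,k\}$. So an arbitrary proper $(k-1)$-colouring of the remaining graph with palette $\{2,\dots,k\}$ need not be consistent. The paper's fix is to take as buffer the set $U$ of \emph{all} uncoloured vertices adjacent to $T\setminus\{u\}$ (where $T$ is the set of already-coloured vertices): this $U$ contains both those non-witness vertices in $N_2(u)$ and the relevant vertices in $N_3(u)$. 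The crucial claim is then that $U\cup\{u\}$ is stable --- and it is precisely here that ``no $C_7$'' is used (an edge inside $U$ would close a cycle of length at most~$7$, and girth $\ge 6$ forces that length to be exactly~$7$). Note also that your worry about a witness in $N_2(u)$ carrying colour~$1$ is unfounded: every $v_i$ already sees colour~$1$ at $u$, so colour~$1$ is never among the ``missing'' colours assigned to witnesses; your proposed recolouring of some $N_3(u)$ vertices is therefore unnecessary and would in fact break the separation property you need. Finally, the side remark that a vertex of $N_3(u)$ has at most one neighbour in $N_2(u)$ is not true under girth~$6$ (two such neighbours only force a $6$-cycle), though this claim is not actually needed once the buffer is chosen correctly.
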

\begin{proof}
If $k=\chi(G)$, then every $k$-coloring of $G$ is also a b-coloring of $G$; hence, suppose that $k\ge \chi(G)+1$. Let $u\in V(G)$ be a $k$-iris, and let $S= \{v_2,\cdots,v_k\}\subseteq N(u)\cap D_k(G)$. 

Start by coloring $u$ with 1 and $v_i$ with $i$, for each $i\in \{2,\cdots,k\}$. Then, for each $i\in\{2,\cdots,k\}$, color $k-2$ uncolored vertices in $N(v_i)$ with the colors missing in $N(v_i)$. Note that this can be done because $G[N_{\le 2}(u)]$ is a tree. Let $\psi$ be the obtained partial coloring, $T$ be the set of colored vertices, and $U$ be the set of uncolored vertices in $N(T-\{u\})$. Note that, because $G$ has girth at least 6 and contains no cycles of length 7, we get that $U\cup\{u\}$ is a stable set. Also, note that $u$ is the only vertex colored with $1$ in $T$, which means that $1\notin \psi(N(v))$ for every $v\in U$. Hence, we can give color 1 to every vertex in $U$. Finally, note that $U\cup \{u\}$ separates $T-\{u\}$ from the rest of the graph, which implies that we can use colors $\{2,\cdots,k\}$ to color the remaining vertices.
\end{proof}

In order to color the dilated $k$-iris, we use a similar approach. However, because some of the vertices of $S$, which are the vertices that we want to turn into b-vertices, are at distance~2 from $u$, we need a stronger restriction on the girth of $G$ in order to have the ``barrier'' of vertices colored with color~1.

\begin{lemma}\label{lem:dilatediris}
Let $G$ be a graph with girth at least~10. If $G$ has a dilated $k$-iris and $k\ge \chi(G)$, then $G$ has a b-coloring with $k$ colors.
\end{lemma}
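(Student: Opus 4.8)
The plan is to mimic the proof of Lemma~\ref{lem:iris}, but handle the fact that $k-1$ of the vertices we want to turn into b-vertices may now sit at distance~2 from $u$. Suppose $k\ge\chi(G)+1$ (the case $k=\chi(G)$ being trivial), let $u$ be a dilated $k$-iris, and write $S=\{v_1,\dots,v_k\}\subseteq N_{\le 2}(u)\cap D_k(G)$ with the property that the vertices of $S\cap N_2(u)$ have pairwise disjoint neighbourhoods. First I would record the structural consequences of girth at least~10: the subgraph $G[N_{\le 4}(u)]$ is a tree, and in particular for any two vertices $x,y\in N_{\le 2}(u)$ the sets $N(x)$ and $N(y)$ meet in at most one vertex (on the unique $x$--$y$ path), so the "spider" built around the $v_i$'s does not create unwanted identifications. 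For each $v_i\in S\cap N_2(u)$ fix the (unique) common neighbour $p_i$ of $u$ and $v_i$; the disjointness hypothesis guarantees the $p_i$ are distinct and, more importantly, that no vertex of $N(v_i)$ other than $p_i$ lies within distance~$2$ of any other $v_j$ with $v_j\in N_2(u)$.

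The colouring is built as follows. Colour $u$ with $1$. For each $v_i\in S\cap N(u)$, colour $v_i$ with some colour $c_i\in\{2,\dots,k\}$, assigning distinct colours; for each $v_i\in S\cap N_2(u)$, colour the intermediate vertex $p_i$ with a colour, and $v_i$ with a colour, all chosen so that the colours on $S$ are $k-1$ distinct values from $\{2,\dots,k\}$ while keeping the whole partial colouring proper — here I use that the tree structure around $u$ gives enough freedom, and that the $p_i$ can always dodge the one colour forbidden by $u$ and the colour of $v_i$, since $k\ge 3$. (If $u$ itself is to be a b-vertex one must be slightly careful that $u$ sees all of $\{2,\dots,k\}$; but for merely producing a b-colouring with $k$ colours it suffices that the $k$ vertices $u,v_1,\dots,v_k$ become b-vertices, and one can arrange $u$'s neighbourhood at the end or simply not require $u$ to be the $k$-th b-vertex, instead noting $S$ already has $k$ vertices.) Next, for each $v_i\in S$, colour $d(v_i)\ge k-1$ of its still-uncolored neighbours with the colours of $\{1,\dots,k\}$ currently missing at $v_i$, which is possible because each $v_i$ is $k$-dense and, by the tree structure, its neighbourhood is almost entirely uncoloured and internally non-adjacent. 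Call the resulting partial proper colouring $\psi$, let $T$ be the set of coloured vertices, and let $U$ be the set of uncoloured vertices having a neighbour in $T\setminus\{u\}$.

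The crucial claim, and the place where girth $10$ rather than $6$ is needed, is that $U\cup\{u\}$ is a stable set and that the colour $1$ appears in $T$ only on $u$. For the second point: the only vertices coloured $1$ in the construction are $u$ and possibly some neighbours of the $v_i$ coloured $1$ in the "missing colours" step; but such a neighbour $w$ of $v_i$ has $\psi(w)=1$ only if $1$ was missing at $v_i$, i.e.\ $v_i$ is not adjacent to $u$, i.e.\ $v_i\in N_2(u)$ and $w\ne p_i$ — so $w\in N_3(u)$, not in $N(u)$, fine; and then no vertex of $U$ can be adjacent to such a $w$ and also to $u$ or to another colour-$1$ vertex, because that would close a short cycle through $u$. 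Concretely, a vertex of $U$ lies in $N_{\le 4}(u)$, and if it were adjacent to two colour-$1$ vertices, or were adjacent to a colour-$1$ vertex while equal to or adjacent to $u$, one would produce a cycle of length at most $8$ or $9$ through $u$, contradicting girth $\ge 10$; a careful case analysis on which "arm" of the spider each endpoint lies in gives the exact bound. Hence $1\notin\psi(N(w))$ for every $w\in U$, so we may extend $\psi$ by giving colour $1$ to all of $U$. Finally, $U\cup\{u\}$ separates $T\setminus\{u\}$ from $V(G)\setminus(T\cup U)$, so we may colour the remaining vertices with the $k-1$ colours $\{2,\dots,k\}$ (using $k-1\ge\chi(G)$, after an optimal colouring is relabelled to avoid colour $1$), and no vertex in $U\cup\{u\}$ objects since its neighbours outside $T$ will use colours from $\{2,\dots,k\}$ only when... — more simply, each vertex of $U$ is coloured $1$ and its neighbours in $T$ are never $1$, while its neighbours outside $T$ get colours in $\{2,\dots,k\}$; so properness is preserved. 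This yields a proper $k$-colouring in which every $v_i$ is a b-vertex, hence a b-colouring of $G$ with $k$ colours.

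I expect the main obstacle to be the verification of the stability of $U\cup\{u\}$ and the "barrier" property of colour $1$, i.e.\ pinning down exactly why girth $10$ (and not, say, $8$ or $9$) is the right threshold: one must carefully track, for each pair of vertices of $U$ and each pair of coloured-$1$ vertices, the lengths of the cycles through $u$ that an unwanted adjacency would create, taking into account that some $v_i$ are at distance $1$ and others at distance $2$, and that the "missing colour" neighbours sit one step further out. The disjointness condition on $\{N(v):v\in S\cap N_2(u)\}$ in the definition of dilated $k$-iris is presumably exactly what is needed to keep these arms of the spider from interfering, and so the proof should reduce to a bounded case analysis once that observation is in place.
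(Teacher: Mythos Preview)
Your overall strategy---build a partial colouring around the iris that makes each $v_i$ a b-vertex, seal it off with a ``barrier'' of colour~$1$, then colour the rest with $\{2,\dots,k\}$---is exactly the paper's. The execution, however, has two genuine gaps.

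First, by colouring $u$ with~$1$ and insisting that $S$ use only colours from $\{2,\dots,k\}$, you force the $k$ vertices of $S$ into $k-1$ colours, so two of them share a colour. You then need a b-vertex of colour~$1$, and the only candidate is $u$; but you never verify that $N(u)$ can be arranged to display all of $\{2,\dots,k\}$, and your parenthetical (``$S$ already has $k$ vertices'') does not address this. The paper avoids the issue entirely by colouring $u$ with $p+1$, the intermediate vertices $w_i$ with~$1$, and $v_1,\dots,v_k$ with the distinct colours $1,\dots,k$; then the $v_i$ themselves realise all $k$ colours and $u$ need not be a b-vertex.

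Second, and more seriously, your barrier argument breaks. You set $U$ to be the uncoloured vertices with a neighbour in $T\setminus\{u\}$ and claim $1\notin\psi(N(w))$ for every $w\in U$. This is false: if $v_i\in N_2(u)$ and $z\in N(v_i)\setminus\{p_i\}$ receives colour~$1$ in the ``missing colours'' step, then $z\in T\setminus\{u\}$ lies in $N_3(u)$, and any uncoloured neighbour $x$ of $z$ is in $U$ with $z\in\psi^{-1}(1)\cap N(x)$; you cannot colour such an $x$ with~$1$. Your cycle-length remark only shows that $x$ is adjacent to at most one colour-$1$ vertex, which is not enough. The paper's fix is to let $C_1$ be \emph{all} colour-$1$ vertices and take $U$ to be the uncoloured neighbours of $C\setminus C_1$; then $C_1\cup U\subseteq N_{\le 4}(u)$ is stable (this is where girth $\ge 10$ enters), every vertex of $U$ really does miss colour~$1$, and $C_1\cup U$---not $U\cup\{u\}$---is the separator.
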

\begin{proof}
Again, we can suppose that $k\ge \chi(G)+1$. Let $u\in V(G)$ be a dilated $k$-iris, and let $S = \{v_1,\cdots,v_k\}\subseteq N_{\le 2}(u)\cap D_k(G)$ satisfying the conditions of the definition. We can suppose that $N_{= 2}(u)\cap S\neq \emptyset$ as otherwise $u$ is also a $k$-iris and the lemma follows by Lemma~\ref{lem:iris}. So suppose, without loss of generality, that $\{v_1,\cdots,v_p\}\subseteq N(u)$ and $\{v_{p+1},\cdots,v_k\}\subseteq N_2(u)$. Also, for each $i\in \{p+1,\cdots,k\}$, let $w_i\in N(v_i)\cap N(u)$, and denote by $W$ the set $\{w_{p+1},\cdots,w_k\}$. 

Start by coloring $u$ with $p+1$, $v_i$ with $i$, for each $i\in \{1,\cdots,k\}$, and $w_i$ with $1$, for each $i\in\{p+1,\cdots,k\}$. Let $v_i,v_j\in S$ and consider $x\in N(v_i)\setminus (W\cup \{u\})$ and $y\in N(v_j)\setminus (W\cup \{u\})$. Because $G$ has girth at least 10 and $x,y\in N_{\le 3}(u)$, we get: $v_i$ is the only neighbor of $x$ in $S\cup W\cup \{u\}$ (the same for $y$ and $v_j$); and $xy\notin E(G)$. Therefore, for each $i\in\{1,\cdots,k\}$, we can easily color $k-2$ neighbors of $v_i$ in such a way as to turn $v_i$ into a b-vertex. 

Now, denote by $C$ be the set of colored vertices, by $C_1$ the set of vertices colored with $1$, and by $U$ the set of uncolored vertices in $N(C \setminus C_1)$. Note that $C_1\cup U\subseteq N_{\le 4}(u)$; therefore, if $x$ and $y$ are distinct vertices in $C_1\cup U$, then their distance to $u$ is at most 4 and, since $G$ has girth at least 10, we get $xy\notin E(G)$. This means that $C_1\cup U$ is a stable set, in which case we can color each vertex of $U$ with color~$1$. Finally, by the choice of $U$, we know that $U\cup C_1$ separates $C\setminus C_1$ from the rest of the graph, and since $k\ge \chi(G)+1$, we can color the remaining vertices with $\{2,\cdots,k\}$
\end{proof}

In the next section, we start with a b-coloring of $G$ with $k+1$ colors, $k\ge\chi(G)$, and try to obtain a b-coloring of $G$ with $k$ colors. In the cases we are not able to do that, we are able to prove that $G$ has either a $k$-iris or a dilated $k$-iris; the theorem thus follows.

\section{Decreasing the number of realized colors}\label{sec:decreasing}

Let $\psi$ be any coloring of $G$ with $k$ colors, and let $\B(\psi)$ denote the set of all b-vertices in $\psi$. If $\psi$ is a b-coloring and $\psi$ minimizes $\lvert \B(\psi)\rvert$, we say that $\psi$ is \emph{minimal}.  If under certain conditions we can decrease the number of b-vertices used in a minimal b-coloring $\psi$, and we can ensure that only b-vertices of a certain color are lost, then we know that the obtained coloring realizes exactly $k-1$ colors. This means that a b-coloring with $k-1$ colors can be obtained by cleaning the color class that is not realized in $\psi$. Next, we describe a number of situations where such a coloring can be obtained. 

Consider $\psi$ to be a minimal b-coloring with $k$ colors. In what follows, we refer only to $\psi$, so we ommit it from the definitions. For each color $i$ denote by $B_i$ the set of b-vertices in color class $i$, i.e., $B_i = \psi^{-1}(i)\cap \B(\psi)$. Also, for each $x\in V(G)\setminus \B(\psi)$, let $U(x)$ contain each $w\in \B(\psi)$ such that $x$ is the only neighbor of $w$ colored with $\psi(x)$. In other words, letting $c=\psi(x)$:
\[U(x) = \{w\in \B(\psi)\mid \psi^{-1}(c)\cap N(w) = \{x\}\}\]

In the following arguments, we usually change the color of some $x\in V(G)\setminus \B(\psi)$ and want to conclude that $\lvert \B(\psi)\rvert$ decreases, while preserving at least $k-1$ realized colors. For this, we want to ensure that some $B_i$ disappears, but also that no new b-vertex is created. The following straightforward proposition is useful.

\begin{propo}\label{prop:nonewBvertex}
Let $\psi$ be a coloring of $G$ with $k$ colors and $x\in V(G)\setminus \B(\psi)$. If $\psi'$ is a coloring of $G$ with $k$ colors such that $\lvert \psi'(N[x])\rvert = \lvert \psi(N[x])\rvert$, then $x\notin \B(\psi')$.
\end{propo}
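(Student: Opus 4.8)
The plan is to reduce the notion of b-vertex to a simple counting condition on the palette seen in the closed neighborhood. In a coloring $\varphi$ of $G$ with $k$ colors, a vertex $u$ is a b-vertex precisely when, for every color $c\neq\varphi(u)$, some neighbor of $u$ carries color $c$; equivalently, since $\varphi$ restricted to $\{u\}$ realizes only the color $\varphi(u)$, this says that $\varphi(N[u])=\{1,\dots,k\}$, i.e., $\lvert\varphi(N[u])\rvert=k$. So the first step is to record the equivalence $u\in\B(\varphi)\iff\lvert\varphi(N[u])\rvert=k$, valid for any proper $k$-coloring $\varphi$.

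With this in hand the proposition is immediate. Since $x\notin\B(\psi)$, the equivalence gives $\lvert\psi(N[x])\rvert\le k-1$. By hypothesis $\lvert\psi'(N[x])\rvert=\lvert\psi(N[x])\rvert\le k-1<k$, so again by the equivalence (now applied to $\psi'$) we conclude $x\notin\B(\psi')$. There is no real obstacle here; the only thing to be careful about is stating the equivalence correctly, namely that the count $\lvert\varphi(N[x])\rvert$ already includes the color $\varphi(x)$ of $x$ itself, so that reaching the full palette of size $k$ on $N[x]$ is exactly the condition for $x$ to be a b-vertex.
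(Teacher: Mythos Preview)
Your argument is correct: the equivalence $u\in\B(\varphi)\iff\lvert\varphi(N[u])\rvert=k$ reduces the statement to a trivial inequality, and your care in noting that $N[x]$ already contributes the color of $x$ is exactly the point. The paper in fact states this proposition as ``straightforward'' and gives no proof, so your write-up matches the intended reasoning.
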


This proposition ensures that if vertex $x$ has its color changed, but nothing else changes in its neighborhood, then $x$ does not turn into a b-vertex. However, this can still happen to a neighbor $y$ of $x$ as the number of colors appearing in $N(y)$ may increase. We say that $x\in V(G)\setminus \B(\psi)$ is \emph{mutable} if there exists $i\in \{1,\cdots,k\}\setminus\psi(N[x])$ such that $\{1,\cdots,k\}\setminus\psi(N[w]\setminus\{x\})\neq \{i\}$, for every $w\in N(x)\setminus \B(\psi)$; also, color~$i$ is said to be \emph{safe for $x$}. This means that changing the color of $x$ to $i$ does not create new b-vertices.  

\begin{propo}\label{prop:unique}
Let $\psi$ be a b-coloring of $G$ with $k$ colors, $k\ge \chi(G)+1$. If $x\in V(G)\setminus \B(\psi)$ is mutable and $\lvert \psi(U(x))\rvert = 1$, then there exists a b-coloring of $G$ with $k-1$ colors.
\end{propo}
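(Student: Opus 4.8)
The plan is to carry out the recoloring suggested by the hypotheses and then track b-vertices carefully. Write $c=\psi(x)$ and fix a color $i$ that is safe for $x$ (such a color exists because $x$ is mutable); note also that, since $\lvert\psi(U(x))\rvert=1$, the set $U(x)$ is nonempty and all of its vertices share one color, which I will call $j$. Let $\psi'$ be obtained from $\psi$ by recoloring $x$ with $i$; this is a proper $k$-coloring because $i\notin\psi(N[x])$.

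The heart of the argument is to prove that $\B(\psi')=\B(\psi)\setminus U(x)$. For the ``no new b-vertex'' part: since $\psi$ is proper, $\lvert\psi(N[x])\rvert=\lvert\psi(N(x))\rvert+1=\lvert\psi'(N[x])\rvert$, so $x\notin\B(\psi')$ by Proposition~\ref{prop:nonewBvertex}; and for any non-b-vertex $w\in N(x)$, the fact that $i$ is safe for $x$ says exactly that $\{1,\dots,k\}\setminus\psi(N[w]\setminus\{x\})\neq\{i\}$, which together with $w\notin\B(\psi)$ forbids $\psi'(N[w])$ from becoming all of $\{1,\dots,k\}$. For the ``lost b-vertices'' part: $\psi$ and $\psi'$ agree outside $N[x]$, so only b-vertices adjacent to $x$ can change status; and for a b-vertex $w\in N(x)$ one checks that, since $i\in\psi(N[w])$ already appears on some vertex of $N[w]\setminus\{x\}$, we have $\psi'(N[w])=\psi(N[w]\setminus\{x\})$, which equals $\{1,\dots,k\}$ iff $x$ is not the unique $c$-colored neighbour of $w$, i.e.\ iff $w\notin U(x)$. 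Hence precisely the vertices of $U(x)$ lose their b-vertex status, and since $U(x)\neq\emptyset$ we get $\lvert\B(\psi')\rvert<\lvert\B(\psi)\rvert$.

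It remains to extract a b-coloring with $k-1$ colors. Every color class other than $j$ still has a b-vertex in $\psi'$, because its b-vertices in $\psi$ lie outside $U(x)\subseteq\psi^{-1}(j)$ and hence were not destroyed. So if color $j$ were still realized in $\psi'$, then $\psi'$ would be a b-coloring with $k$ colors and strictly fewer b-vertices than $\psi$, contradicting the minimality of $\psi$. Therefore color $j$ is not realized in $\psi'$, and we may clean color $j$ to obtain a coloring $\psi''$ of $G$ with $k-1$ colors. Finally, $\psi''$ is a b-coloring: for each color $c'\neq j$, take a b-vertex $w$ of color $c'$ in $\psi'$; cleaning only recolors $j$-colored vertices, so $w$ keeps a neighbour of every color in $\{1,\dots,k\}\setminus\{c',j\}$, hence $w$ is still a b-vertex with respect to the $k-1$ surviving colors.

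The main obstacle is the bookkeeping in the second paragraph, namely establishing the exact equality $\B(\psi')=\B(\psi)\setminus U(x)$: one has to verify that the ``mutable'' hypothesis is exactly what rules out new b-vertices among the non-b-neighbours of $x$, and that the definition of $U(x)$ exactly captures the b-vertices killed by the recoloring. The properness of $\psi'$, the minimality step, and the survival of b-vertices under cleaning are all routine.
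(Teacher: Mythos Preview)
Your proof is correct and follows the same approach as the paper: recolor $x$ to a safe color, argue that exactly the b-vertices in $U(x)$ are destroyed and none are created, invoke minimality of $\psi$ to conclude that color $j$ is no longer realized, and clean it. Your version is simply more explicit about the bookkeeping (in particular the equality $\B(\psi')=\B(\psi)\setminus U(x)$ and the survival of b-vertices after cleaning), but there is no substantive difference in strategy.
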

\begin{proof}
Let $\psi(U(x)) = \{d\}$. Just change the color of $x$ to any safe color $c\notin \psi(N[x])$, and let $\psi'$ be the obtained coloring. Proposition \ref{prop:nonewBvertex} clearly applies. Also, by the definition of $U(x)$, we know that every $w\in U(x)$ is not a b-vertex in $\psi'$ since color $\psi(x)$ does not appear in $\psi'(N(w))$. Finally, by the definition of safe color we know that $\B(\psi')\subset \B(\psi)$. Therefore, only b-vertices of color $d$ are lost, no new b-vertex is created, and because $\psi$ is minimal, we know that there is no remaining b-vertices with color $d$. Thus, the desired b-coloring can be obtained by cleaning color $d$.
\end{proof}

Now, for each $u\in \B(\psi)$ and each color $i\in \{1,\cdots,k\}\setminus\psi(u)$, let $B_i(u)=N(u)\cap B_i$, and $R_i(u)$ be the remaining neighbors of $u$ colored with $i$, i.e., $R_i(u) = (N(u)\setminus B_i)\cap \psi^{-1}(i)$. We say that color $i$ is \emph{weak in $N(u)$} if, for every $x\in R_i(u)$, $x$ is mutable and the following occurs:
\begin{enumerate}
  \item[(*)] For every $w\in U(x)\setminus \{u\}$, there exists $w'\in \B(\psi)$ that realizes color $\psi(w)$ and such that $w'\notin N(R_i(u))$.
\end{enumerate}

Condition (*) means that, if the color of every $x\in R_i(u)$ gets changed, then not every b-vertex of color $\psi(w)$ is lost. 

\begin{propo}\label{prop:weak}
Let $\psi$ be a b-coloring of $G$ with $k$ colors, $k\ge \chi(G)+1$, and $u\in \B(\psi)$. If $i\in\{1,\cdots,k\}\setminus\{\psi(u)\}$ is weak in $N(u)$ and $B_i(u)=\emptyset$, then there exists a b-coloring of $G$ with $k-1$ colors.
\end{propo}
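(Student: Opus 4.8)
The plan is to produce a new coloring $\psi'$ from $\psi$ by recoloring, all at once, every neighbor of $u$ that has color $i$; I will then argue that this removes $u$ from the set of b-vertices, creates no new b-vertex, and leaves every color other than $\psi(u)$ realized, so that minimality of $\psi$ forces $\psi(u)$ to be the unrealized color and cleaning it yields a b-coloring with $k-1$ colors. Since $B_i(u)=\emptyset$, the vertices to recolor are exactly $R_i(u)=N(u)\cap\psi^{-1}(i)$, which is nonempty because $u$ is a b-vertex and $\psi(u)\ne i$; and because $G$ has girth at least $6$, the set $R_i(u)$ is independent and no two of its vertices share a neighbor other than $u$. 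As $i$ is weak in $N(u)$, each $x\in R_i(u)$ is mutable, hence has a safe color $c_x\notin\psi(N[x])$ (so $c_x\ne i$); I recolor each such $x$ to $c_x$, obtaining $\psi'$, which is proper since $R_i(u)$ is independent and each $c_x$ avoids $\psi(N[x])$.

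First I would check that $\B(\psi')\subsetneq\B(\psi)$. No recolored $x$ becomes a b-vertex, by Proposition~\ref{prop:nonewBvertex}, since $|\psi'(N[x])|=|\psi(N[x])|$. Every other vertex whose neighborhood changes lies in $N(R_i(u))\setminus\{u\}$, and by the girth hypothesis it has exactly one recolored neighbor $x\in R_i(u)$; as it belongs to $N(x)\setminus\B(\psi)$, the choice of $c_x$ as a safe color for $x$ prevents it from being a b-vertex in $\psi'$. On the other hand, color $i$ no longer appears in $N(u)$, so $u\notin\B(\psi')$, and hence $u\in\B(\psi)\setminus\B(\psi')$.

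Next I would identify the colors realized in $\psi'$. Every b-vertex of $\psi$ outside $\{u\}\cup\bigcup_{x\in R_i(u)}U(x)$ is still a b-vertex in $\psi'$, because the only color that could disappear from its neighborhood would be the color $i$ carried by some recolored neighbor $x$, and that requires $x$ to be its unique $i$-colored neighbor, i.e., it to lie in $U(x)$. For a b-vertex $w\ne u$ that is lost we therefore have $w\in U(x)\setminus\{u\}$ for some $x\in R_i(u)$, and condition (*) provides a b-vertex $w'$ realizing $\psi(w)$ with $w'\notin N(R_i(u))$; since $w'$ is a b-vertex it is not in $R_i(u)$ either, so $\psi'(N[w'])=\psi(N[w'])$ and $w'\in\B(\psi')$. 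Thus every color other than possibly $\psi(u)$ remains realized in $\psi'$.

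Finally, were $\psi(u)$ still realized in $\psi'$, then $\psi'$ would be a b-coloring with $k$ colors and $|\B(\psi')|<|\B(\psi)|$, contradicting the minimality of $\psi$; so $\psi(u)$ is not realized in $\psi'$, and cleaning color $\psi(u)$ produces a b-coloring of $G$ with $k-1$ colors. I expect the delicate point to be the ``no new b-vertex'' verification: since all of $R_i(u)$ is recolored simultaneously, one must make sure that two of these recolorings cannot reinforce each other at a common neighbor, which is exactly where the absence of $4$-cycles through $u$ is used; the rest is bookkeeping guided by the definitions of \emph{mutable}, \emph{safe}, and \emph{weak}, plus the single appeal to minimality.
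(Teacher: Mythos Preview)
Your proposal is correct and follows the same approach as the paper: recolor each $x\in R_i(u)$ to a safe color, observe that $u$ loses color $i$ in its neighborhood while no new b-vertex is created and, by condition~(*), every color other than $\psi(u)$ stays realized, then invoke minimality and clean $\psi(u)$. You are simply more explicit than the paper about why the simultaneous recolorings cannot interfere at a common neighbor (this is exactly where the absence of $4$-cycles through $u$ is needed, and indeed girth at least~$5$ already suffices for that step).
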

\begin{proof}
We change the color of each $x\in R_i(u)$ to any safe color $c\in\{1,\cdots,k\}\setminus \psi(N[x])$. Because $R_i(u)$ is a stable set, Proposition \ref{prop:nonewBvertex} applies to each $x\in R_i(u)$. Also, if $j\notin \psi(U(x)\setminus\{u\})$, we know that $j$ does not lose any b-vertex; otherwise, by Condition (*) above, we know that there still exists some vertex that realizes color $j$. Now, $u$ has no more neighbors of color $i$, which by the minimality of $\psi$ implies that no more b-vertices of color $\psi(u)$ exists. Thus, the desired b-coloring can be obtained by cleaning color $\psi(u)$.
\end{proof}

The next lemma finishes the proof. 

\begin{lema}
Let $G$ be a graph with girth at least~5, and $\psi$ be a b-coloring of $G$ with $k$ colors, $k\ge \chi(G)+1$. Then either there exists a b-coloring of $G$ with $k-1$ colors, or $G$ has a $(k-1)$-iris, or a dilated $(k-1)$-iris.
\end{lema}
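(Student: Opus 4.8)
The plan is to argue by contradiction. Suppose $G$ has no b-coloring with $k-1$ colors and let $\psi$ be a \emph{minimal} b-coloring with $k$ colors; I want to produce a $(k-1)$-iris or a dilated $(k-1)$-iris. Fix a b-vertex $u\in\B(\psi)$ (so $d(u)\ge k-1$, i.e.\ $u\in D_{k-1}(G)$), and try to attach to each color $c\in\{1,\dots,k\}\setminus\{\psi(u)\}$ a \emph{witness} $\phi(c)\in N_{\le2}(u)\cap D_{k-1}(G)$, together with a \emph{routing vertex} $x_c\in N(u)$ of color $c$ whenever $\phi(c)\in N_2(u)$. If all the $\phi(c)$ land in $N(u)$, then $u$ is already a $(k-1)$-iris; otherwise I try to read off a dilated $(k-1)$-iris centered at $u$.

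\emph{Building one witness.} Fix $c\neq\psi(u)$. If $B_c(u)\neq\emptyset$, or if some vertex of $R_c(u)$ lies in $D_{k-1}(G)$, take such a vertex as $\phi(c)\in N(u)$ (every b-vertex is $k$-dense, hence $(k-1)$-dense). Otherwise $B_c(u)=\emptyset$ and every $x\in R_c(u)$ has $d(x)\le k-3$. Since Proposition~\ref{prop:weak} cannot be applied to $u$ and $c$, color $c$ is not weak in $N(u)$, so some $x\in R_c(u)$ is not mutable, or is mutable but violates~(*). If $x$ is not mutable, then $|\psi(N[x])|\le d(x)+1\le k-2$, so at least two colors are missing from $\psi(N[x])$, and the definition of ``not mutable'' yields, for each missing color $i$, a non-b-vertex $w\in N(x)$ with $|\psi(N[w]\setminus\{x\})|=k-1$ and hence $d(w)\ge k-1$; distinct missing colors give distinct such $w$, so $x$ has at least two neighbors in $D_k(G)$. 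If $x$ is mutable but violates~(*), then $U(x)\setminus\{u\}\neq\emptyset$, and since Proposition~\ref{prop:unique} cannot be applied we have $|\psi(U(x))|\neq1$, hence $|U(x)|\ge2$; these are b-vertices adjacent to $x$, so they lie in $D_k(G)$. In either sub-case take $\phi(c)$ to be one of the produced vertices distinct from $u$; since $u$ adjacent to both $x$ and $\phi(c)$ would form a triangle, $\phi(c)\in N_2(u)$, and set $x_c=x$.

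\emph{Distinctness and the easy outcomes.} If $c\neq c'$ gave the same distance-$2$ witness, then $u\,x_c\,\phi(c)\,x_{c'}\,u$ would be a $4$-cycle ($x_c\neq x_{c'}$ as they carry colors $c\neq c'$), contradicting the girth; distance-$1$ witnesses of distinct colors carry distinct colors, and a distance-$1$ and a distance-$2$ witness sit at different distances from $u$. Thus $\{\phi(c):c\neq\psi(u)\}$ consists of $k-1$ distinct vertices of $N_{\le2}(u)\cap D_{k-1}(G)$. If all lie in $N(u)$, then $u$ is a $(k-1)$-iris; if at most one lies in $N_2(u)$, then $u$ is a dilated $(k-1)$-iris, the neighborhood-disjointness clause being vacuous. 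In either case $G$ has a $(k-1)$-iris or a dilated $(k-1)$-iris, as required.

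\emph{The main obstacle.} What is left --- and is the real content --- is the case of two or more distance-$2$ witnesses, where I must choose them with \emph{pairwise disjoint neighborhoods}. The leverage is that, except in one special configuration, the routing vertex $x_c$ has at least two candidate witnesses in $N(x_c)\cap D_{k-1}(G)$, and the absence of $4$-cycles through $x_c$ guarantees that no two candidates of $x_c$ share a neighbor and that any fixed vertex is adjacent to at most one candidate of $x_c$; hence a conflict between $\phi(c)$ and another chosen witness can be repaired by re-selecting $\phi(c)$ outside that witness's neighborhood. Turning these local repairs into one globally conflict-free choice is the delicate step, and I expect it to require a Hall-type or greedy selection on the incidence between colors and their candidate witnesses, together with a separate, direct treatment of the residual configuration --- where a color $c$ has $x_c$ as its only neighbor of color $c$ in $N(u)$ and $U(x_c)\setminus\{u\}$ is a single b-vertex $\phi(c)$ --- using precisely what the failure of~(*) encodes there (namely, that \emph{every} b-vertex of color $\psi(\phi(c))$ lies in $N(x_c)$), together with $d(x_c)\le k-3$ and girth at least $5$, to forbid $\phi(c)$ from sharing a neighbor with another forced witness. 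Once the distance-$2$ witnesses are chosen pairwise neighborhood-disjoint, $u$ is a dilated $(k-1)$-iris; everything outside this last step is routine chasing of triangles and $4$-cycles governed by Propositions~\ref{prop:nonewBvertex},~\ref{prop:unique}, and~\ref{prop:weak}.
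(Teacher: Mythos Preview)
Your overall strategy mirrors the paper's: fix a b-vertex $u$, split the colors $c\neq\psi(u)$ according to whether $B_c(u)\neq\emptyset$, and for each color with $B_c(u)=\emptyset$ use the failure of Proposition~\ref{prop:weak} to extract a $k$-dense witness $v_c\in N(x_c)$ from some offending $x_c\in R_c(u)$; girth at least~$5$ then forces $v_c\in N_2(u)$ and the $v_c$'s to be pairwise distinct. The paper does exactly this, only more tersely and without the detour through Proposition~\ref{prop:unique}.

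The divergence is in what you call ``the main obstacle'': the pairwise disjoint-neighbourhood condition on the distance-$2$ witnesses required by the definition of a dilated $(k-1)$-iris. You devote half the proposal to sketching a Hall-type selection scheme and a special-configuration analysis, and you explicitly leave this step open. The paper's proof does \emph{not} address this condition at all --- after establishing $v_i\neq v_j$ and $v_i\notin N(u)$ it simply asserts that $u$ is a dilated $(k-1)$-iris. So you have, in effect, spotted a gap in the paper's own argument for the lemma as stated under girth~$5$.

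That gap is harmless for Theorem~\ref{main}, however, and you should not try to close it under girth~$5$. The only consumer of this lemma is Lemma~\ref{lem:dilatediris}, which assumes girth at least~$10$; there (indeed already at girth~$7$) the disjoint-neighbourhood clause is automatic, since a common neighbour $z$ of $v_i,v_j\in N_2(u)$ would yield the $6$-cycle $u\,x_i\,v_i\,z\,v_j\,x_j\,u$. Your elaborate selection argument is thus chasing a condition that either comes for free under the girth hypothesis where the lemma is actually applied, or may genuinely fail under girth~$5$ alone; the right repair is to note this redundancy, not to attempt a Hall-type choice.
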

\begin{proof}
Let $u$ be any b-vertex in $G$ and suppose, without loss of generality, that $\psi$ is minimal. Let $S=\{i\mid B_i(u)=\emptyset\}$. First, note that if $\lvert S\rvert\le1$, then $u$ has at least $k-2$ neighbors which have degree at least $k-1$, i.e., $u$ is a $(k-1)$-iris and we are done. So suppose otherwise, and let $x_i\in R_i(u)$, for some $i\in S$. If $i$ is a weak color in $N(u)$, the lemma follows by Proposition \ref{prop:weak}. Otherwise, either $x_i$ is unmutable, or (*) does not hold for $x_i$. In both cases, one can see that there exists $v_i \in N(x_i)\cap D_k(G)$. Since $G$ has girth at least~5, we know that $v_i\notin N(u)$, for every $i\in S$, and that $v_i\neq v_j$, for every $i,j\in S$, $i\neq j$. Therefore, we get that $u$ is a dilated $(k-1)$-iris.
\end{proof}

\section{Conclusion}

We have proved that graphs with girth at least~10 are b-continuous, giving yet more evidence that being locally acyclic helps at obtaining b-colorings of the graph, contrary to the classical coloring problem. We mention that the girth~10 condition is needed in only one small part of the proof, namely to color a dilated $k$-iris. However, one can observe that the arguments applied in Section~\ref{sec:decreasing} concern the vicinity of only one vertex. This means that when Propositions~\ref{prop:unique} and~\ref{prop:weak} cannot be applied, then the graph $G$ is a lot more structured, namely that every mutable vertex $x$ has $\lvert \psi(U(x))\rvert\neq 1$, and every b-vertex $u$ is the center of either a $(k-1)$-iris or a dilated $(k-1)$-iris. In fact, we believe that even the ``quasi-b-vertices'' (vertices in the neighborhood of unmutable vertices) also generate irises. This is why we believe that our result can be generalized for every graph with girth at least~6 and with no cycles of length~7, which would generalize the result in~\cite{BK.12}. Additionally, given the interest in regular graphs and bipartite graphs with large girth, we pose the following questions.

\begin{question}
Are regular graphs with girth at least~5 b-continuous?
\end{question}

\begin{question}
Are bipartite graphs with girth at least~6 b-continuous?
\end{question}


\end{document}